\documentclass[11pt]{amsart}

\usepackage[T1]{fontenc}
\usepackage{amsmath,amssymb,mathtools}
\usepackage{microtype}
\usepackage[hidelinks]{hyperref}

\theoremstyle{plain}
\newtheorem{theorem}{Theorem}
\newtheorem{proposition}[theorem]{Proposition}
\newtheorem{lemma}[theorem]{Lemma}
\theoremstyle{remark}
\newtheorem{remark}[theorem]{Remark}

\newcommand{\F}{\mathbb F}
\newcommand{\La}{\operatorname{La}}
\DeclareMathOperator{\Span}{span}

\title[An Improved Lower Bound for Diamond-Free Families]{An Improved Lower Bound for Diamond-Free Families}
\author{Casey Tompkins}
\email{casey.tompkins@renyi.hu}
\date{}

\makeatletter
\def\@setauthors{%
  \begingroup
  \def\thanks{\protect\thanks@warning}%
  \trivlist
  \centering\scriptsize \@topsep30\p@\relax
  \advance\@topsep by -\baselineskip
  \item\relax
  \author@andify\authors
  \def\\{\protect\linebreak}%
  \MakeUppercase{\authors}%
  \ifx\@empty\contribs
  \else
    ,\penalty-3 \space \@setcontribs
    \@closetoccontribs
  \fi
  \endtrivlist
  \endgroup
}
\makeatother

\begin{document}

\begin{abstract}
We construct a diamond-free family in the Boolean lattice whose size is asymptotically larger than the union of two middle layers. Denote the diamond poset by $Q_2$, and let $\La(n,Q_2)$ be the maximum size of a family in $2^{[n]}$ containing no weak copy of $Q_2$. We prove
\[
\La(n,Q_2)
\ge
\left(2+\frac{29}{135}\prod_{i=1}^{\infty}(1-4^{-i})+o(1)\right)
\binom{n}{\lfloor n/2\rfloor}.
\]
The leading coefficient is approximately $2.147908$. In particular, this disproves the diamond conjecture.

\end{abstract}

\maketitle
\vspace{-0.9\baselineskip}

\section{Introduction}
Let $[n]=\{1,2,\dots,n\}$ and let $2^{[n]}$ denote the family of subsets of $[n]$ ordered by inclusion.  For a finite poset~$P$, a family $\mathcal F\subseteq 2^{[n]}$ contains a weak copy of~$P$ if there is an injective map $\psi\colon P\to \mathcal{F}$ such that, whenever $p<q$ in $P$, we have $\psi(p) \subset \psi(q)$.  We write
\[
\La(n,P)=\max\{|\mathcal F| : \mathcal F\subseteq 2^{[n]}\text{ contains no weak copy of }P\}.
\]

These problems extend the classical theorem of Sperner~\cite{Sperner}, which is the case when $P$ is the poset consisting of two comparable elements.  Erd\H{o}s's theorem~\cite{Erdos} determining the maximum size of a family avoiding $k+1$ distinct sets $A_1,A_2,\dots,A_{k+1}$ with $A_1\subset A_2 \subset \dots \subset A_{k+1}$
corresponds to taking $P$ to be a total order of size $k+1$.  The extremal constructions are the unions of $k$ consecutive layers of sizes closest to $n/2$.

Katona and Tarj\'an~\cite{KatonaTarjan} initiated the general study of the function $\La(n,P)$.  Their first examples already included the three-element poset $V$ with relations $p<q$ and $p<r$.  They proved asymptotic results for this case, and also gave exact results for avoiding $V$ together with its dual $\Lambda$. This line of work was subsequently extended to $r$-forks and other small height-two posets, for example by De Bonis and Katona~\cite{DeBonisKatona}, and then to a wide variety of further posets.

For a general finite poset $P$, let $e(P)$ be the largest integer $t$ such that the union of any $t$ consecutive layers of any Boolean lattice is $P$-free.  This gives the general lower bound $\La(n,P)\ge (e(P)+o(1))\binom{n}{\lfloor n/2\rfloor}$.  Bukh~\cite{Bukh} proved the matching asymptotic for every poset whose Hasse diagram is a tree.  If such a poset $T$ has height $h(T)$, then $\La(n,T)=(h(T)-1+o(1))\binom{n}{\lfloor n/2\rfloor}$.  Bukh~\cite{Bukh} and Griggs and Lu~\cite{GriggsLu} conjectured that, for every fixed finite poset~$P$,
\[
\La(n,P)=(e(P)+o(1))\binom{n}{\lfloor n/2\rfloor}.
\]

This conjecture was consistent with the shape of the known constructions, which mostly had the form of consecutive middle layers, sometimes supplemented by scattered sets from one additional layer.  Ellis, Ivan and Leader~\cite{EllisIvanLeader} disproved the conjecture in general using a construction based on daisy-free hypergraphs of positive Tur\'an density.  In particular, they disproved the conjecture for $P=Q_d$ with $d\ge 4$, where $Q_d$ is the Boolean lattice of dimension~$d$.
The construction of the daisy-free layer involves labelling elements by vectors in a finite-dimensional vector space and keeping linearly independent sets to obtain a daisy-free family in one middle layer.
In the resulting construction, this intermediate layer replaces one full layer inside a block of otherwise full layers.  They note that their construction leaves the diamond case $Q_2$ open.  Recently, Gerbner and Patk\'os~\cite{GerbnerPatkos} obtained a counterexample to the conjecture of Bukh~\cite{Bukh} and Griggs and Lu~\cite{GriggsLu} for a height-two poset~$P$ with $e(P)=1$, also using daisy-free hypergraphs.

Determining the value of $\La(n,Q_2)$ is a central open case in the forbidden poset literature. A widely held conjecture, often referred to as the diamond conjecture (see, for example, Johnston and Lu~\cite{JohnstonLu}), asserts that
\[
\La(n,Q_2)=(2+o(1))\binom{n}{\lfloor n/2\rfloor}.
\]
Since $e(Q_2)=2$, this agrees with the prediction of the (now disproved) conjecture mentioned earlier.  Before the present construction, no lower bound with leading coefficient exceeding $2$ was known for $\La(n,Q_2)$.

One indication that the coefficient $2$ for the diamond was not tied to two complete layers was the work of Czabarka, Dutle, Johnston and Sz\'ekely~\cite{CzabarkaDutleJohnstonSzekely}. They produced  diamond-free families based on abelian-groups using more than $2$ layers, all of which have density strictly between $0$ and $1$, while still having total size  $(2+o(1))\binom{n}{\lfloor n/2\rfloor}$. They note, however, that their method cannot produce constructions of leading coefficient greater than $2$.

Our main result is the following lower bound.

\begin{theorem}\label{thmMain}
With
\[
\Delta_4=\prod_{i=1}^{\infty}(1-4^{-i}),
\]
we have
\[
\La(n,Q_2)\ge
\left(2+\frac{29}{135}\Delta_4+o(1)\right)
\binom{n}{\lfloor n/2\rfloor}.
\]
The coefficient is approximately $2.147908$.
\end{theorem}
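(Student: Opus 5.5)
We construct a diamond-free family $\mathcal F=\mathcal A\cup\mathcal B\cup\mathcal C\cup\mathcal D$ occupying four consecutive middle layers $k-1,k,k+1,k+2$ with $k\approx n/2$. The idea follows Ellis, Ivan and Leader: label each element $x\in[n]$ by a vector $v_x\in\F_2^d$, with $d$ fixed and the labelling chosen at random, and define membership in each layer by linear-algebraic conditions on the multiset $\{v_x:x\in A\}$. The constant $\Delta_4=\prod_{i\ge1}(1-4^{-i})$ is the limiting probability that a random square matrix over $\F_4$ is invertible. Equivalently, it is the limiting probability, as $m\to\infty$, that $m$ random vectors in $\F_4^m$ are linearly independent. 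So I would take labels in $\F_4^m$, or pairs of $\F_2$-vectors, and put a set in the sparse layers when its labels span, or are independent, in a prescribed way.

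The plan has four steps.

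\textbf{Step 1.} Fix the layer rules. The two middle layers $k$ and $k+1$ are kept except for a removed subfamily of proportion $\varepsilon$. The outer layers $k-1$ and $k+2$ receive sets whose label configuration is "generic", meaning independent or full rank. The middle sets removed are exactly those that could complete a diamond with these generic outer sets.

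\textbf{Step 2.} Enumerate the possible weak diamonds. A diamond $A\subset B,C\subset D$ with $B\neq C$ spans layers in one of a few patterns: $(k-1;k,k;k+1)$, $(k-1;k,k+1;k+2)$, $(k;k+1,k+1;k+2)$, and the degenerate patterns in which $B$ and $C$ are comparable. For each pattern I would check that the linear conditions force a contradiction. The typical argument runs as follows. If $A$ is independent with $|A|=k-1$ and $D\supset A$ has $|D|=k+2$, then $D\setminus A$ consists of three elements. The middle sets are required to avoid a "dependency witness" among the two-element and one-element extensions. This is the daisy-freeness mechanism: any two $B,C$ between $A$ and $D$ determine the configuration, so it suffices to forbid one of them.

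\textbf{Step 3.} Compute densities. By concentration, or by averaging over random labellings, the density of the outer layers converges to an invertibility probability, giving factors of the form $\Delta_4$. The loss in the middle layers is a smaller rational multiple of it. The net gain is $\tfrac{29}{135}\Delta_4$, where $135=27\cdot 5$ presumably arises from the ratios $\binom{n}{k\pm j}/\binom{n}{k}\to1$ combined with the probabilities of the finite local rank events. I would optimise the rule parameters to reach exactly this rational number.

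\textbf{Step 4.} Average. Since every ratio $\binom{n}{k+j}/\binom{n}{\lfloor n/2\rfloor}$ for $j\in\{-1,0,1,2\}$ tends to $1$, averaging over labellings yields a labelling achieving the expected size. Taking $m\to\infty$ slowly then gives the $o(1)$ term.

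The main obstacle is Step 2 together with designing the rules so that the middle-layer deletions are provably smaller than the outer-layer gains. In particular, the mixed pattern $(k-1;k,k+1;k+2)$ is delicate, because there $B$ and $C$ lie in different layers. I would first try a rule in which a $k$-set (respectively a $(k+1)$-set) is deleted iff it contains a generic $(k-1)$-set (respectively is contained in a generic $(k+2)$-set) in a specific "parity" position, and then check all patterns by a case analysis on $|D\setminus A|\le 3$.
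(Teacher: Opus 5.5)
Your proposal never fixes the membership rules, so the two substantive steps are not carried out: the diamond-freeness check and the density computation. ``Optimise the rule parameters to reach exactly this rational number'' is not an argument. The $135$ also cannot come from the ratios $\binom{n}{k+j}/\binom{n}{k}$, since these all tend to $1$.

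Moreover, the architecture you do commit to cannot give a constant gain, for two reasons.

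\emph{The dimension cannot be fixed or grow slowly.} The rank events that produce $\Delta_4$ involve about $n/2$ labels. They are non-degenerate only when the dimension is $k$ up to an additive constant. For $m=o(k)$, no $(k-1)$-set has independent labels, and almost every set spans $\F_4^m$.

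\emph{Nearly full middle layers are incompatible with a dense outer layer.} Let $\mathcal A\subseteq\binom{[n]}{k-1}$ have size $a\binom{n}{k}$. Let each $A\in\mathcal A$ have a $p_A$-fraction of its one-element extensions in the family. Every $A\cup\{x,y\}$ with $A\cup\{x\}$ and $A\cup\{y\}$ present must be absent. Double counting therefore shows that layer $k+1$ misses roughly $\sum_{A}p_A^2$ sets. On the other hand, if layer $k$ misses $\varepsilon_1\binom nk$ sets, then $\sum_A(1-p_A)\lesssim\varepsilon_1\binom{n}{k}$. By Cauchy--Schwarz, layer $k+1$ misses at least about an $(a-\varepsilon_1)^2/a$ fraction. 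If both middle layers lose at most an $\varepsilon$-fraction, this forces $a\le\frac{3+\sqrt5}{2}\varepsilon$. So ``generic'' outer layers of density comparable to $\Delta_4$ next to a small deletion are impossible. The fourth layer only adds further obstructions: 4-chains, and diamonds from layer $k-1$ to layer $k+2$.

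The missing idea is to use only the three layers $k-1,k,k+1$ and to make the \emph{middle} layer the sparse one. Label $[n]$ by independent uniform vectors in $\F_4^k$, where $k=\lfloor n/2\rfloor$. Keep:
\begin{itemize}
\item the $(k-1)$-sets with independent labels;
\item the $k$-sets with dependent labels;
\item the $(k+1)$-sets whose label span does not have dimension exactly $k-1$.
\end{itemize}
In three consecutive layers the only weak diamond is $B\subset B\cup\{x\},B\cup\{y\}\subset B\cup\{x,y\}$ with $|B|=k-1$. Since $\dim W(B)=k-1\ge\dim W(B\cup\{x\})$, we get $\phi(x)\in W(B)$, and likewise $\phi(y)\in W(B)$. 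Hence $W(B\cup\{x,y\})=W(B)$, and the top set was excluded.

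The rank-counting formula gives limiting densities
\begin{itemize}
\item $\frac43\Delta_4\approx0.918$ in layer $k-1$;
\item $1-\Delta_4\approx0.311$ in layer $k$;
\item $1-\frac1{12}\prod_{j\ge3}(1-4^{-j})=1-\frac{16}{135}\Delta_4\approx0.918$ in layer $k+1$.
\end{itemize}
These sum to $2+\frac{29}{135}\Delta_4$, and a first-moment choice of labelling finishes the proof. The gain comes from the top layer losing only the rare event ``span of dimension exactly $k-1$'' while the bottom layer stays dense. This is the opposite of your nearly full middle layers.
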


Since $e(Q_2)=2$, Theorem~\ref{thmMain} gives a counterexample to the equality $\La(n,P)=(e(P)+o(1))\binom{n}{\lfloor n/2\rfloor}$ already for $P=Q_2$.  

The upper bound for $\La(n,Q_2)$ has been successively improved across several papers.  The trivial coefficient $3$ follows from Erd\H{o}s's theorem~\cite{Erdos}, since a $Q_2$-free family contains no chain of four sets.  Griggs, Li and Lu~\cite{GriggsLiLu} used a chain-counting argument to obtain the coefficient $2.5$, and then refined the upper bound to $2.296$.  Axenovich, Manske and Martin~\cite{AxenovichManskeMartin} improved this to $2.283261$. Griggs, Li and Lu~\cite{GriggsLiLu} then improved it to $25/11\approx2.273$.  Kramer, Martin and Young~\cite{KramerMartinYoung} obtained $2.25$.  The best known upper bound is $(3+\sqrt2)/2\approx2.2071$ due to Gr\'osz, Methuku and Tompkins~\cite{GroszMethukuTompkins}.  For families using at most three layers, Manske and Shen~\cite{ManskeShen} proved the upper bound of $((3+2\sqrt 3)/3+o(1))\binom{n}{\lfloor n/2\rfloor}$ under this restriction, where $(3+2\sqrt3)/3\approx2.1547$.  Under the same three-layer restriction, Balogh, Hu, Lidick\'y and Liu~\cite{BaloghHuLidickyLiu} used flag algebras to improve the leading coefficient to approximately $2.15121$.  Thus the coefficient in Theorem~\ref{thmMain} is only approximately $0.003302$ below the best known upper bound from the three-layer setting.

Our construction uses only the layers of sizes $k-1,k,k+1$, where $k=\lfloor n/2\rfloor$.  Our approach, like that of Ellis, Ivan and Leader~\cite{EllisIvanLeader}, is based on linear algebra, but we impose conditions on the dimensions of the spans of the labels in all three layers.
After labelling $[n]$ by vectors in $\F_4^k$, we retain lower-layer sets whose label spans have dimension $k-1$, middle-layer sets whose label spans have dimension at most $k-1$, and upper-layer sets whose label spans do not have dimension $k-1$.  If a retained lower-layer set $B$ has two retained one-element extensions, both added labels lie in the span of the labels of $B$.  Consequently, the labels of their union span the same $(k-1)$-dimensional subspace, so the union is excluded from the upper layer.  The limiting densities in the three layers are approximately $0.91805$, $0.31146$, and $0.91840$, respectively.

\section{Proof of Theorem~\ref{thmMain}}

Unless otherwise specified, all vector spaces considered are over the field $\F_4$.  Let
\[
k=\lfloor n/2\rfloor,
\qquad
V=\F_4^k,
\]
and let $\phi\colon[n]\to V$ be a labelling of the ground set.  For $A\subseteq[n]$, let
\[
W_\phi(A)=\Span\{\phi(a) : a\in A\},
\]
which we call the label span of $A$.  When the labelling is clear, we write $W(A)$.  Equivalently, if the vectors $\phi(a)$ with $a\in A$ are regarded as the columns of a $k\times |A|$ matrix, then $W_\phi(A)$ is its column space and $\dim W_\phi(A)$ is its rank.  We use only the elementary fact that adjoining a vector already in a span leaves the span unchanged, while adjoining a vector outside it increases its dimension by one.

Define
\[
\mathcal L_\phi=\{A\in \binom{[n]}{k-1} : \dim W(A)=k-1\},
\]
\[
\mathcal M_\phi=\{A\in \binom{[n]}{k} : \dim W(A)\le k-1\},
\]
and
\[
\mathcal U_\phi=\{A\in \binom{[n]}{k+1} : \dim W(A)\ne k-1\}.
\]
Finally, set
\[
\mathcal F_\phi=\mathcal L_\phi\cup\mathcal M_\phi\cup\mathcal U_\phi.
\]
Thus the lower layer consists of sets whose labels span a $(k-1)$-dimensional subspace, the middle layer consists of sets whose labels span a subspace of dimension at most $k-1$, and the upper layer omits precisely the sets whose labels span a $(k-1)$-dimensional subspace.

\begin{proposition}\label{propQfree}
For every labelling $\phi\colon[n]\to\F_4^k$, the family $\mathcal F_\phi$ is $Q_2$-free.
\end{proposition}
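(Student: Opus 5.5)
Suppose for contradiction that $\mathcal F_\phi$ contains a weak copy of $Q_2$: distinct sets $A\subset B,C\subset D$ with $B\ne C$ and $B,C$ incomparable or not. Since $\mathcal F_\phi$ lives in three consecutive layers $k-1,k,k+1$, a chain of four distinct sets is impossible. Hence $A$ lies in layer $k-1$, $D$ in layer $k+1$, and $B,C$ are both in layer $k$; in particular $B\ne C$ and $B=A\cup\{b\}$, $C=A\cup\{c\}$ with $b\ne c$, and $D=A\cup\{b,c\}$. So the whole proof reduces to ruling out this single configuration with $A\in\mathcal L_\phi$, $B,C\in\mathcal M_\phi$, $D\in\mathcal U_\phi$. (The only alternative would be $B$ or $C$ sharing a layer with $A$ or $D$, which contradicts strict containment.)

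Now use the span conditions. Since $A\in\mathcal L_\phi$, $\dim W(A)=k-1$. Since $B\in\mathcal M_\phi$, $\dim W(B)\le k-1$; but $W(A)\subseteq W(B)$, so $W(B)=W(A)$, meaning $\phi(b)\in W(A)$. Identically $\phi(c)\in W(A)$ from $C\in\mathcal M_\phi$. Then adjoining $\phi(b)$ and $\phi(c)$ to $A$'s labels leaves the span unchanged, so $W(D)=W(A)$ has dimension exactly $k-1$, contradicting $D\in\mathcal U_\phi$.

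There is no real obstacle; the only step requiring care is the layer bookkeeping in the first paragraph, namely checking that injectivity plus strict inclusions force the four sets into the pattern $k-1,k,k,k+1$ (in particular that $A$ and $D$ cannot collapse into the middle layer). This uses only that $\mathcal F_\phi\subseteq\binom{[n]}{k-1}\cup\binom{[n]}{k}\cup\binom{[n]}{k+1}$, and that a strict inclusion increases size by at least one. Note that $\phi$ is arbitrary throughout, matching the statement.
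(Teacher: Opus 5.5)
Your proposal is correct and follows essentially the same argument as the paper: the layer bookkeeping forces the pattern $k-1,k,k,k+1$, and then the middle-layer condition forces $\phi(b),\phi(c)\in W(A)$, giving $\dim W(D)=k-1$ in contradiction with $D\in\mathcal U_\phi$.
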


\begin{proof}
Suppose that $B,X,Y,C\in\mathcal F_\phi$ form a weak copy of $Q_2$, with
\[
B\subset X\subset C,
\qquad
B\subset Y\subset C,
\qquad
X\ne Y.
\]
Since $\mathcal F_\phi$ is supported on the three consecutive layers of set sizes $k-1,k,k+1$, we must have
\[
|B|=k-1,
\qquad
|X|=|Y|=k,
\qquad
|C|=k+1.
\]
Hence $B\in\mathcal L_\phi$, $X,Y\in\mathcal M_\phi$, and $C\in\mathcal U_\phi$.  There are distinct $x,y\notin B$ such that
\[
X=B\cup\{x\},
\qquad
Y=B\cup\{y\},
\qquad
C=B\cup\{x,y\}.
\]
Since $B\in\mathcal L_\phi$, we have $\dim W(B)=k-1$.  Since $X\in\mathcal M_\phi$, we have $\dim W(X)\le k-1$, while $B\subset X$ implies $W(B)\subseteq W(X)$.  Hence $W(X)=W(B)$, and in particular
\[
\phi(x)\in W(B).
\]
The same argument applied to $Y$ gives
\[
\phi(y)\in W(B).
\]
Thus $W(C)=W(B)$, so
\[
\dim W(C)=k-1.
\]
This contradicts the definition of $\mathcal U_\phi$.  Hence no such copy of $Q_2$ exists.
\end{proof}
For each $t \in [n]$, choose $\phi(t)$ independently and uniformly at random
from~$V$.  For a set $A$ of size $s$, the labels indexed by $A$ are $s$ vectors sampled independently and uniformly from $\F_4^k$.

Recall that the Gaussian binomial coefficient
\[
\genfrac{[}{]}{0pt}{}{m}{r}_q
=\prod_{i=0}^{r-1}\frac{q^m-q^i}{q^r-q^i}
\]
is the number of $r$-dimensional subspaces of $\F_q^m$.

We use the following well-known formula for counting matrices of a given rank, and include a proof for completeness.

\begin{proposition}\label{propRankcount}
Let $q$ be a prime power and let $0\le r\le \min\{m,s\}$.  The number of $m\times s$ matrices over $\F_q$ having rank $r$ is
\begin{equation}\label{eqRankcount}
\genfrac{[}{]}{0pt}{}{m}{r}_q\prod_{i=0}^{r-1}(q^s-q^i).
\end{equation}
\end{proposition}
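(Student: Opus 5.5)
We count $m\times s$ matrices of rank $r$ by grouping them according to their column space. A matrix $M\in\F_q^{m\times s}$ of rank $r$ has column space $W=\operatorname{col}(M)$, which is an $r$-dimensional subspace of $\F_q^m$. Conversely, $M$ is determined by its columns, and $\operatorname{col}(M)=W$ holds exactly when every column lies in $W$ and the columns span $W$. So the count equals
\[
\sum_{\dim W=r} N(W),
\]
where $N(W)$ is the number of $s$-tuples of vectors in $W$ that span $W$. Since there are $\genfrac{[}{]}{0pt}{}{m}{r}_q$ subspaces $W$, it suffices to show that $N(W)=\prod_{i=0}^{r-1}(q^s-q^i)$ for each of them.

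Fix such a $W$ and an isomorphism $W\cong\F_q^r$. An $s$-tuple of vectors in $W$ then becomes an $r\times s$ matrix $N$ over $\F_q$, and the tuple spans $W$ exactly when $N$ has rank $r$, that is, full row rank. The rank of $N$ equals the rank of its transpose, so this is equivalent to the $r$ rows of $N$, viewed as vectors in $\F_q^s$, being linearly independent.

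We count ordered linearly independent $r$-tuples in $\F_q^s$ by choosing the rows one at a time. The $(i+1)$-st row must avoid the span of the first $i$ rows. That span has $q^i$ elements, because the first $i$ rows are independent. This leaves $q^s-q^i$ choices, for $i=0,\dots,r-1$. Multiplying the choices gives $N(W)=\prod_{i=0}^{r-1}(q^s-q^i)$, which does not depend on $W$. Summing over the $\genfrac{[}{]}{0pt}{}{m}{r}_q$ subspaces yields \eqref{eqRankcount}.

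The only point that needs care is passing from "the columns span $W$" to "the rows of $N$ are independent", which uses row rank equals column rank. The boundary case $r=0$ is consistent: there is one subspace, the empty product equals $1$, and the zero matrix is the unique matrix of rank $0$.
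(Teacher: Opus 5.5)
Your proof is correct and follows the paper's argument essentially verbatim. Like the paper, you choose the $r$-dimensional column space in $\genfrac{[}{]}{0pt}{}{m}{r}_q$ ways, write the columns in a basis of it, and use row rank equals column rank to count full-rank $r\times s$ coordinate matrices row by row, giving $\prod_{i=0}^{r-1}(q^s-q^i)$.
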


\begin{proof}
First choose the column space $W$, which may be any $r$-dimensional subspace of $\F_q^m$.  There are $\genfrac{[}{]}{0pt}{}{m}{r}_q$ choices.  Once $W$ is fixed, fix a basis of $W$ and write the columns in this basis.  The resulting $r\times s$ coordinate matrix must have rank $r$.  By equality of row rank and column rank, this is equivalent to its $r$ rows being linearly independent vectors in $\F_q^s$.  Choosing the rows successively gives
\[
(q^s-1)(q^s-q)\cdots(q^s-q^{r-1})
=\prod_{i=0}^{r-1}(q^s-q^i)
\]
possibilities.  Multiplying by the number of choices for $W$ proves \eqref{eqRankcount}.
\end{proof}

Taking $r=s\le m$ in Proposition~\ref{propRankcount} and dividing by the total number $q^{ms}$ of matrices shows that vectors $v_1,\dots,v_s$ chosen independently and uniformly from $\F_q^m$ are linearly independent with probability $\prod_{i=0}^{s-1}(1-q^{i-m})$.

\begin{lemma}\label{lemDensities}
For the random labelling above, the following estimates hold for sets of the indicated sizes.
\begin{align}
&\Pr(A\in\mathcal L_\phi)=\frac43\Delta_4+o(1), && |A|=k-1, \label{eqLdensity}\\
&\Pr(A\in\mathcal M_\phi)=1-\Delta_4+o(1), && |A|=k, \label{eqMdensity}\\
&\Pr(A\in\mathcal U_\phi)=1-\frac{16}{135}\Delta_4+o(1), && |A|=k+1. \label{eqUdensity}
\end{align}
\end{lemma}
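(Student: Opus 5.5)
Each of the three estimates follows from Proposition~\ref{propRankcount} with $q=4$ and $m=k$: divide the matrix counts by $4^{ks}$ and let $k\to\infty$.

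The key step is to write the probability that $s$ uniform vectors in $\F_4^k$ span a space of dimension exactly $r$ as
\[
P(k,s,r)=4^{-ks}\genfrac{[}{]}{0pt}{}{k}{r}_4\prod_{i=0}^{r-1}(4^s-4^i).
\]
Then I specialize to the cases needed, namely $s\in\{k-1,k,k+1\}$ and $r\in\{k-1,k\}$. For each case I factor out powers of $4$ so that every factor has the form $(1-4^{-j})$. I expect the limits to be products of partial products of $\Delta_4$ times explicit rational constants.

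\textbf{Lower layer.} Take $s=r=k-1$. By the remark after Proposition~\ref{propRankcount}, the probability is $\prod_{i=0}^{k-2}(1-4^{i-k})=\prod_{j=2}^{k}(1-4^{-j})$. This tends to $\Delta_4/(1-1/4)=\frac43\Delta_4$.

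\textbf{Middle layer.} Take $s=k$. The complement event is full rank $k$, whose probability is $\prod_{j=1}^{k}(1-4^{-j})\to\Delta_4$. This gives \eqref{eqMdensity}.

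\textbf{Upper layer.} Take $s=k+1$. Here $\Pr(A\notin\mathcal U_\phi)=P(k,k+1,k-1)$. Using $\genfrac{[}{]}{0pt}{}{k}{k-1}_4=\frac{4^k-1}{3}$, we get
\[
P(k,k+1,k-1)=4^{-k(k+1)}\cdot\frac{4^k-1}{3}\prod_{i=0}^{k-2}(4^{k+1}-4^i).
\]
Pulling out $4^{k+1}$ from each of the $k-1$ factors gives $4^{(k+1)(k-1)}\prod_{i=0}^{k-2}(1-4^{i-k-1})=4^{k^2-1}\prod_{j=3}^{k+1}(1-4^{-j})$. The prefactor is $\frac{4^k-1}{3}\cdot 4^{-k^2-k}$, so the total is
\[
\frac{(1-4^{-k})}{3\cdot 4}\prod_{j=3}^{k+1}(1-4^{-j}).
\]
As $k\to\infty$ this tends to
\[
\frac{1}{12}\cdot\frac{\Delta_4}{(1-\frac14)(1-\frac1{16})}=\frac{1}{12}\cdot\frac{64}{45}\Delta_4=\frac{16}{135}\Delta_4.
\]
This matches \eqref{eqUdensity}.

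\textbf{Main obstacle.} The only real difficulty is this bookkeeping of powers of $4$ in the upper-layer case. Convergence of all partial products to the tails of $\Delta_4$ is immediate, since $\sum 4^{-j}<\infty$. The $o(1)$ terms are uniform in $A$ because each probability depends only on $|A|$. If $n$ is odd, $k=\lfloor n/2\rfloor$ still tends to infinity, and the layer sizes $k-1,k,k+1$ are as stated, so nothing changes.
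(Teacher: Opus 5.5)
Your proposal is correct and follows the paper's proof essentially step for step. You use the independence probability for the lower and middle layers, and you apply Proposition~\ref{propRankcount} with $q=4$, $m=k$, $s=k+1$, $r=k-1$ for the upper layer, reaching the same normalized form $\frac{1}{12}(1-4^{-k})\prod_{j=3}^{k+1}(1-4^{-j})\to\frac{16}{135}\Delta_4$.
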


\begin{proof}
For $|A|=k-1$, the preceding probability formula gives
\[
\Pr(A\in\mathcal L_\phi)=\prod_{i=0}^{k-2}(1-4^{i-k})
=\prod_{j=2}^{k}(1-4^{-j})
=\frac43\Delta_4+o(1).
\]
For $|A|=k$, the set $A$ belongs to $\mathcal M_\phi$ precisely when $\dim W(A)\le k-1$, so this event is the complement of the event that the $k$ labels are linearly independent.  Therefore
\[
\Pr(A\in\mathcal M_\phi)=1-\prod_{i=0}^{k-1}(1-4^{i-k})
=1-\prod_{j=1}^{k}(1-4^{-j})
=1-\Delta_4+o(1).
\]

It remains to estimate the probability that the label span of a fixed $(k+1)$-set has dimension $k-1$, since this is precisely the excluded event on the upper layer.  Applying~\eqref{eqRankcount} to the matrix whose columns are its labels, with $q=4$, $m=k$, $s=k+1$, and $r=k-1$, and using
\[
\genfrac{[}{]}{0pt}{}{k}{k-1}_4=\frac{4^k-1}{4-1},
\]
gives
\begin{align*}
\Pr(\dim W(A)=k-1)
&=\frac{1}{4^{k(k+1)}}\genfrac{[}{]}{0pt}{}{k}{k-1}_4
\prod_{i=0}^{k-2}(4^{k+1}-4^i)\\
&=\frac{1}{4^{k(k+1)}}\cdot \frac{4^k-1}{3}
\prod_{i=0}^{k-2}(4^{k+1}-4^i)\\
&=\frac1{12}(1-4^{-k})\prod_{i=0}^{k-2}(1-4^{i-k-1})\\
&=\frac1{12}(1-4^{-k})\prod_{j=3}^{k+1}(1-4^{-j}).
\end{align*}
Letting $k\to\infty$, this tends to
\[
\frac1{12}\prod_{j=3}^{\infty}(1-4^{-j})
=\frac1{12}\cdot \frac{\Delta_4}{(1-4^{-1})(1-4^{-2})}
=\frac{16}{135}\Delta_4.
\]
Since $\mathcal U_\phi$ is the complement of this event on the upper layer, \eqref{eqUdensity} follows.
\end{proof}

By linearity of expectation and Lemma~\ref{lemDensities},
\begin{align*}
\mathbb E|\mathcal F_\phi|
&=\binom n{k-1}\left(\frac43\Delta_4+o(1)\right)
+\binom nk\left(1-\Delta_4+o(1)\right)\\
&\qquad +\binom n{k+1}\left(1-\frac{16}{135}\Delta_4+o(1)\right).
\end{align*}
Since $k=\lfloor n/2\rfloor$,
\[
\binom n{k-1}=(1+o(1))\binom nk,
\qquad
\binom n{k+1}=(1+o(1))\binom nk.
\]
Therefore
\begin{align*}
\mathbb E|\mathcal F_\phi|
&=\left(\frac43\Delta_4+1-\Delta_4+1-\frac{16}{135}\Delta_4+o(1)\right)\binom nk\\
&=\left(2+\frac{29}{135}\Delta_4+o(1)\right)\binom nk.
\end{align*}
Hence some labelling $\phi$ satisfies
\[
|\mathcal F_\phi|\ge
\left(2+\frac{29}{135}\Delta_4+o(1)\right)\binom nk.
\]
By Proposition~\ref{propQfree}, this family is $Q_2$-free.  Since $k=\lfloor n/2\rfloor$, this proves Theorem~\ref{thmMain}.

\begin{remark}
The same construction works over every finite field $\F_q$.  Writing
\[
\Delta_q=\prod_{i=1}^{\infty}(1-q^{-i}),
\]
the resulting leading coefficient is
\[
c_q=2+\frac{q^3-2q^2-q+1}{(q-1)^2(q^2-1)}\Delta_q.
\]
A direct computation shows that $c_q$ is maximized among prime powers at $q=4$.
\end{remark}

\section{Use of automated tools}

The main construction in this manuscript was developed through a discussion with ChatGPT 5.5.  All arguments have been verified carefully by the author, who takes full responsibility for the content of the manuscript.

\end{document}